\documentclass[a4paper,11pt]{article}

\usepackage[left=1.5cm,right=1.5cm,top=1.5cm,bottom=1.5cm]{geometry}
\usepackage{amsmath,amssymb,amsthm,mathtools}
\usepackage{newtxtext,newtxmath}
\DeclareSymbolFont{lightAMSbb}{U}{msb}{m}{n}
\DeclareSymbolFontAlphabet{\mathbb}{lightAMSbb}
\usepackage{enumitem}
\usepackage{indentfirst}
\usepackage{microtype}
\usepackage{xurl}
\usepackage{authblk}
\usepackage{cite}
\usepackage[
  colorlinks=true,
  linkcolor=blue,
  citecolor=blue,
  urlcolor=blue
]{hyperref}

\hypersetup{
  pdftitle={A Necessary and Sufficient Condition for the Existence of Global Smooth Solutions to the One-Dimensional Inviscid Burgers-Vlasov System},
  pdfauthor={Chengfeng Ren and Huimin Yu},
  pdfsubject={Global smooth solutions for the one-dimensional inviscid Burgers-Vlasov system},
  pdfkeywords={Burgers-Vlasov system, global smooth solutions, necessary and sufficient condition, critical threshold, finite-time blow-up}
}

\allowdisplaybreaks[3]
\numberwithin{equation}{section}
\newtheorem{theorem}{Theorem}[section]
\newtheorem{proposition}[theorem]{Proposition}

\newtheorem{corollary}[theorem]{Corollary}
\theoremstyle{remark}

\newcommand{\eqtaglabel}[1]{\refstepcounter{equation}\tag{\theequation}\label{#1}}
\newcommand{\thmref}[1]{Theorem~\ref{#1}}
\newcommand{\propref}[1]{Proposition~\ref{#1}}

\newcommand{\corref}[1]{Corollary~\ref{#1}}
\newcommand{\secref}[1]{Section~\ref{#1}}

\title{\bfseries A Necessary and Sufficient Condition for the Existence of Global Smooth Solutions to the One-Dimensional Inviscid Burgers-Vlasov System}
\author{Chengfeng Ren\qquad Huimin Yu\textsuperscript{*}}
\affil{\small\itshape School of Mathematics and Statistics, Shandong Normal University, Jinan 250358, China}
\date{}

\begin{document}

\maketitle
\vspace{-1.5em}
\begingroup
\renewcommand{\thefootnote}{}
\begin{NoHyper}
\footnotetext{E-mail: \href{mailto:15163322620@163.com}{15163322620@163.com}}
\footnotetext{\textsuperscript{*}Corresponding author: \href{mailto:hmyu@sdnu.edu.cn}{hmyu@sdnu.edu.cn}}
\end{NoHyper}
\endgroup

\begin{center}
\begin{minipage}{\textwidth}
\small
\noindent\textbf{Abstract}: We investigate global smooth solvability for the one-dimensional inviscid Burgers-Vlasov system. For a smooth initial fluid velocity and a nonnegative, compactly supported particle distribution, we establish a sharp necessary and sufficient threshold expressed through the sum of the initial velocity gradient and the initial particle density. The principal difficulty is to control simultaneously the nonlinear steepening of the fluid velocity, the drag coupling, the particle velocity support, and the full hierarchy of derivatives. To overcome it, we introduce a combined gradient-density quantity whose evolution closes along fluid characteristics. Its sign invariance, together with characteristic support estimates and fixed-order derivative bounds, yields global continuation whenever the initial threshold is nonnegative. If the threshold fails at one point, a Riccati comparison gives an explicit finite upper bound for the maximal smooth lifespan and identifies negative velocity-gradient blow-up when this bound is attained. We also use nonnegative regularization and compactness to extend the threshold result to initial data of any fixed finite regularity.\par
\noindent\textbf{Keywords}: Burgers-Vlasov system, global smooth solutions, necessary and sufficient condition, critical threshold, finite-time blow-up.\par
\noindent\textbf{Mathematics Subject Classification 2020}: Primary 35Q83; Secondary 35L60, 35B44.
\end{minipage}
\end{center}

\section{Introduction}\label{sec:introduction}

We consider the following one-dimensional inviscid Burgers-Vlasov system:
\[
\begin{cases}
u_t+uu_x=\displaystyle\int_{\mathbb R}vf\,dv
-u\displaystyle\int_{\mathbb R}f\,dv,\\[1mm]
f_t+vf_x+\bigl(f(u-v)\bigr)_v=0,
\end{cases}
\eqtaglabel{eq:1.1}
\]
\par\medskip
\noindent with the initial data
\[
u(0,x)=u_0(x),\qquad
f(0,x,v)=f_0(x,v)\ge0.
\eqtaglabel{eq:1.2}
\]
Here \(u=u(t,x)\) denotes the fluid velocity and \(f=f(t,x,v)\) denotes the particle distribution function in phase space.

The Burgers-Vlasov system is a simplified kinetic-fluid model for dispersed two-phase flows. The carrier fluid is described by a macroscopic velocity field, whereas the dispersed particles are represented by a distribution function in phase space. The two phases interact through the drag force \(u-v\), which tends to align the particle velocity with the surrounding fluid velocity. Related kinetic descriptions of dispersed and bubbly flows, and their macroscopic limits, were studied in \cite{ref12} and \cite{ref13}, respectively.

For viscous Burgers-Vlasov systems, Domelevo and Roquejoffre \cite{ref5} established the existence of classical solutions and studied the stability of travelling waves. Domelevo \cite{ref6} subsequently developed a well-posedness theory for less regular fluid data and measure-valued particle distributions. For more general kinetic-fluid spray models, Baranger and Desvillettes \cite{ref10} proved local existence and uniqueness of classical solutions under suitable smoothness and support assumptions.

In the inviscid setting, under the assumptions \(u_0\in L^\infty(\mathbb R)\) and \((1+v^2)f_0\in L^1(\mathbb R^2)\), Yu and Cao \cite{ref1} proved the existence of at least one global \(L^\infty-L^1\) weak solution to \eqref{eq:1.1}--\eqref{eq:1.2}, in which \(u\) is an entropy solution of the Burgers equation with a source term and \(f\) is a distributional solution of the Vlasov equation. Their proof uses an artificial-viscosity approximation, \(L^\infty\) compensated compactness, and Dunford-Pettis weak compactness to pass to the inviscid limit. For the same Burgers-Vlasov model, Cao and Wang \cite{ref2} studied the vanishing-viscosity limit from the viscous system and obtained a finite-energy weak solution of the inviscid equations. Global weak solutions have also been established for related Euler-Vlasov systems \cite{ref3,ref4}. These results provide a weak-solution theory after possible shock formation, but they do not characterize which smooth initial data prevent the formation of singularities.

The distinction between global smoothness and finite-time breakdown is closely related to the critical-threshold phenomenon initiated for Euler-Poisson equations \cite{ref14} and subsequently developed for Euler equations with alignment and other nonlocal forces \cite{ref7,ref8,ref9}. In these models, a suitable combination of the velocity gradient and the density determines whether the solution remains smooth or develops a singularity. Although those models differ from \eqref{eq:1.1}, their threshold analyses provide a natural background for the present problem and motivate the search for a corresponding sharp threshold for the inviscid Burgers-Vlasov system.

The critical-threshold results cited above are generally formulated at a fixed finite regularity. More specifically, Bhatnagar, Liu, and Tan \cite{ref9} formulate their local theory in finite Sobolev spaces, Carrillo et al.\ \cite{ref8} and Tadmor and Tan \cite{ref7} use strong or Lipschitz solution classes, and the one-dimensional theory of Engelberg, Liu, and Tadmor \cite{ref14} is stated for \(C^1\) initial data. In the present paper, we first work with smooth data so that the characteristic identities and the derivative hierarchy can be derived without additional approximation notation. The estimates themselves are obtained separately at each finite order. We then use nonnegative regularization and compactness to extend the threshold result to data of any prescribed finite regularity. Thus the assumption of infinitely differentiable data in the main theorem is a convenient framework for the smooth statement, rather than a minimal regularity requirement.

Our purpose is to obtain a necessary and sufficient condition for the existence of a global smooth solution to \eqref{eq:1.1}--\eqref{eq:1.2}. Define
\[
\rho_0(x)=\int_{\mathbb R}f_0(x,v)\,dv
\]
and set \(F_0=u_0'+\rho_0\). The main theorem states that a global smooth solution exists precisely when \(F_0(x)\ge0\) for every \(x\in\mathbb R\). The proof is based on the combined quantity \(F=u_x+\rho\), which obeys
\(F_t+uF_x=F(\rho-F)\) along fluid characteristics. If \(F_0\ge0\), this equation preserves nonnegativity and, together with estimates for the particle velocity support, the distribution function, and higher derivatives, yields a priori bounds on every finite time interval. If \(F_0(x_0)<0\) at some point \(x_0\), then along the fluid characteristic issued from \(x_0\),
\(\frac{d}{dt}(-F)\ge(-F)^2\), and hence
\[
T_{\text{max}}\le\dfrac{1}{-F_0(x_0)}<\infty.
\]
Thus the maximal smooth lifespan is finite and a global smooth solution cannot exist.

The remainder of the paper is organized as follows. \secref{sec:local} introduces the function spaces, states the main theorem, and establishes the local solution theory and continuation criterion used in the proof. \secref{sec:key} derives the particle characteristic formula and the evolution equation for \(F\). \secref{sec:main-proof} proves the main theorem.

\section{Main Result and Local Existence}\label{sec:local}

For an integer \(k\ge0\), let
\[
C_b^k(\mathbb R^d)
=
\left\{
g\in C^k(\mathbb R^d):
\partial^\alpha g\in L^\infty(\mathbb R^d)
\ \text{for all }|\alpha|\le k
\right\},
\qquad
\|g\|_{C_b^k}
=
\sum_{|\alpha|\le k}\|\partial^\alpha g\|_{L^\infty},
\]
and define
\[
C_b^\infty(\mathbb R^d)
=\bigcap_{k=0}^{\infty}C_b^k(\mathbb R^d).
\]
We write \(C_c^k(\mathbb R^d)\) for the space of compactly supported \(C^k\) functions.

For \(d\in\{1,2\}\) and an integer \(m\ge0\), we also use the Sobolev space
\[
W^{m,\infty}(\mathbb R^d)
=
\left\{
g\in L^\infty(\mathbb R^d):
\partial^\alpha g\in L^\infty(\mathbb R^d)
\ \text{for all }|\alpha|\le m
\right\},
\]
where the derivatives are understood in the distributional sense and
\[
\|g\|_{W^{m,\infty}}
=\sum_{|\alpha|\le m}\|\partial^\alpha g\|_{L^\infty}.
\]
Here \(d=1\) is used for the fluid velocity \(u\), whereas \(d=2\) is used for the particle distribution \(f\). If \(m\ge2\), \(g\in W^{m,\infty}(\mathbb R^d)\), and \(|\alpha|\le m-1\), then \(\partial^\alpha g\in W^{1,\infty}(\mathbb R^d)\) and hence admits a Lipschitz-continuous representative. In particular, all derivatives of order at most \(m-1\) are continuous.

Having introduced the function spaces, we now state the main result. It gives a necessary and sufficient condition for problem \eqref{eq:1.1}--\eqref{eq:1.2} to admit a global smooth solution and an upper bound for the maximal lifespan when this condition fails. The local smooth solution and the maximal lifespan appearing in the statement are provided by \propref{prop:local} below.

\begin{theorem}[Main theorem]\label{thm:main}
Assume that
\[
u_0\in C_b^\infty(\mathbb R),\qquad
0\le f_0\in C_c^\infty(\mathbb R^2).
\eqtaglabel{eq:2.1}
\]
Define
\[
\rho_0(x)=\int_{\mathbb R}f_0(x,v)\,dv,
\qquad
F_0(x)=u_0'(x)+\rho_0(x).
\]
Then problem \eqref{eq:1.1}--\eqref{eq:1.2} admits a global smooth solution if and only if
\[
F_0(x)\ge0,
\qquad x\in\mathbb R.
\eqtaglabel{eq:2.2}
\]
Moreover, if \(F_0(x_0)<0\) for some \(x_0\in\mathbb R\), then the maximal lifespan \(T_{\text{max}}\) of the corresponding local smooth solution satisfies
\[
T_{\text{max}}\le\dfrac{1}{-F_0(x_0)}<\infty.
\eqtaglabel{eq:2.3}
\]
\end{theorem}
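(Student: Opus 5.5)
The plan is to take the local existence and continuation criterion of \propref{prop:local} as given. Concretely, I assume it says that the smooth solution on \([0,T_{\max})\) can be continued as long as \(\|u_x\|_{L^\infty}\) stays bounded, the particle velocity support stays bounded, and \(\|f\|_{C^k_b}\) stays bounded. The whole argument then rests on the quantity \(F=u_x+\rho\) with \(\rho=\int f\,dv\), and goes in three steps: derive an evolution equation for \(F\), prove global existence when \(F_0\ge0\), and prove blow-up when \(F_0(x_0)<0\).

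\emph{Step 1 (the equation for \(F\)).} First, integrate the Vlasov equation in \(v\); the \(v\)-divergence term vanishes by compact support. This gives \(\rho_t+(j)_x=0\) with \(j=\int vf\,dv\).

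Next, differentiate the fluid equation in \(x\):
\[
u_{xt}+uu_{xx}+u_x^2=j_x-u_x\rho-u\rho_x .
\]
Adding the two identities, the \(j_x\) terms cancel and
\[
F_t+uF_x=-u_x^2-u_x\rho=-u_x(u_x+\rho)=F(\rho-F).
\]
Along a fluid characteristic \(\dot X=u(t,X)\) this becomes the ODE \(\dot F=F(\rho-F)\).

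I also record the particle characteristics, \(\dot x=v\) and \(\dot v=u-v\). The Jacobian of this flow is \(e^{t}\), so \(f\) along a particle characteristic equals \(f_0\,e^{t}\). This gives \(\|f(t)\|_{L^\infty}\le e^t\|f_0\|_{L^\infty}\). Since \(|v|\) obeys \(\frac{d}{dt}|v|\le\|u\|_\infty-|v|\), and \(\|u\|_{L^\infty}\) grows at most exponentially (the right side of the \(u\) equation is linear in \(u\) with coefficients controlled by the \(\rho\) and \(j\) bounds), the \(v\)-support stays bounded on finite intervals. Because the \(x\)-support moves at finite speed, \(\rho\) is bounded on finite intervals as well.

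\emph{Step 2 (sufficiency).} Assume \(F_0\ge0\). The ODE \(\dot F=F(\rho-F)\) is linear in \(F\) with coefficient \(\rho-F\), so \(F(t)=F_0\exp\int(\rho-F)\). Hence \(F\ge0\) for all time, which gives the lower bound \(u_x\ge-\rho\ge -C(T)\).

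For the upper bound, \(\dot F\le F\rho\le C(T)F\) gives \(F\le \|F_0\|_\infty e^{C(T)t}\). Together these bound \(\|u_x\|_{L^\infty}\) on any \([0,T]\).

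With \(u_x\) bounded, I then bound the derivative hierarchy order by order. Differentiating the system \(k\) times yields linear transport equations for \(\partial^k u\) and \(\partial^\alpha f\), whose coefficients involve only lower-order quantities that are already bounded. A Gronwall argument along characteristics then closes the bound at each fixed \(k\). The continuation criterion in \propref{prop:local} therefore gives \(T_{\max}=\infty\).

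\emph{Step 3 (necessity and the lifespan bound).} Suppose \(F_0(x_0)<0\), and follow the fluid characteristic from \(x_0\). By uniqueness for the ODE, \(F\) stays negative along it. Since \(\rho\ge0\),
\[
\tfrac{d}{dt}(-F)=(-F)^2+\rho(-F)\ge(-F)^2 .
\]
Comparison with the Riccati equation then gives \(-F(t)\ge \bigl(1/(-F_0(x_0))-t\bigr)^{-1}\). This forces \(T_{\max}\le 1/(-F_0(x_0))\), because a smooth solution with bounded \(\rho\) would have bounded \(F\), a contradiction. The same bound shows no global smooth solution exists.

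\emph{Main obstacle.} I expect the main difficulty to be closing the higher-order estimates uniformly on \([0,T]\) in Step 2. The issue is the coupling: \(\partial_x^k\rho\) feeds into \(\partial_x^k u\), while \(\partial_x^k u\) feeds back into the \(v\)-derivatives of \(f\) through the force \(u-v\). I would handle this by estimating \(\partial^\alpha f\) along particle characteristics and \(\partial_x^k u\) along fluid characteristics jointly, with a single Gronwall inequality at each order. The bounded \(v\)-support is what converts \(L^\infty\) bounds on \(f\) and its derivatives into bounds on \(\rho\), \(j\), and their derivatives.
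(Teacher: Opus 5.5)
Your derivation of the equation for $F$, the sign preservation, the Riccati comparison giving $T_{\max}\le 1/(-F_0(x_0))$, and the order-by-order derivative hierarchy all agree with the paper. The gap is in the zeroth-order step that all of Step~2 rests on: the bound on $\|u(t)\|_{L^\infty}$ and on the particle velocity support, and hence on $\rho$. Your justification is circular.

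The only control you have on $f$ is $\|f(t)\|_{L^\infty}\le e^t\|f_0\|_{L^\infty}$, and this controls the moments only through the velocity support. With $V(t)=\sup\{|v|:(x,v)\in\operatorname{supp}f(t)\}$, one has $\rho\le 2V(t)e^t\|f_0\|_{L^\infty}$ and $|j|\le V(t)\rho$. So the ``$\rho$ and $j$ bounds'' you use to make the $u$-equation linear with controlled coefficients are exactly what you are trying to prove. If you close the loop honestly, you get
$\|u(t)\|_{L^\infty}\le\|u_0\|_{L^\infty}+2\|f_0\|_{L^\infty}\int_0^te^sV(s)^2\,ds$ and $V(t)\le\max\{V(0),\sup_{s\le t}\|u(s)\|_{L^\infty}\}$. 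This is a quadratic, Riccati-type inequality, and it yields bounds only up to a finite time determined by the data, not on an arbitrary $[0,T]$.

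You also cannot sidestep this by noting that a smooth solution has bounded $\rho$ on each $[0,T]$ with $T<T_{\max}$. The continuation argument needs constants that stay bounded as $T\to T_{\max}$. The bound $\rho\le C(T)$ enters your lower bound $u_x\ge-\rho$ and your upper bound for $F$. Through the velocity support, it also enters every estimate of $\partial_x^k\rho$ and $\partial_x^kj$ in the hierarchy. So the sufficiency proof does not close as written. Two smaller points: the finite speed of the $x$-support is irrelevant to bounding $\rho$; and the phase-space flow has Jacobian $e^{-t}$, not $e^{t}$, since the field $(v,u-v)$ has divergence $-1$, although your conclusion $f=e^tf_0$ along characteristics is correct.

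The missing idea is the sign structure of the drag term, $j-u\rho=\int f(v-u)\,dv$ with $f\ge0$, combined with a \emph{joint} maximum principle for the fluid and particle velocities.
\begin{itemize}
\item Along a fluid characteristic, suppose every particle velocity lies in $[-R,R]$ and $u=R$. Then $\frac{d}{dt}u(t,Y(t))=\int f(v-R)\,dv\le0$, and symmetrically at $u=-R$.
\item Along a particle characteristic, $\dot V=u-V\le0$ at $V=R$ whenever $|u|\le R$, and symmetrically at $V=-R$.
\end{itemize}
Take $R_0\ge\|u_0\|_{L^\infty}$ with $|v|\le R_0$ on $\operatorname{supp}f_0$. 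A first-contact argument with $R_\varepsilon=R_0+\varepsilon$, followed by $\varepsilon\to0$, shows that $[-R_0,R_0]$ is invariant for both velocities. This gives $\|u(t)\|_{L^\infty}\le R_0$ and $f(t,x,v)=0$ for $|v|>R_0$ throughout the lifespan. Hence $\|\rho(t)\|_{L^\infty}\le2R_0e^t\|f_0\|_{L^\infty}$, with a constant depending only on the data and $t$. With this bound in hand, the rest of your Step~2 goes through. Your exponential bound $F\le\|F_0\|_{L^\infty}e^{C(T)t}$ is a valid substitute for the paper's invariant interval $[0,M_T]$.
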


For each fixed \(t\), we use the abbreviations
\[
u(t)=u(t,\cdot),
\qquad
f(t)=f(t,\cdot,\cdot).
\]
Accordingly, \(\operatorname{supp}f(t)\) denotes the support of the function \((x,v)\mapsto f(t,x,v)\) in phase space. The proof of \thmref{thm:main} uses the existence and uniqueness of a local smooth solution together with a continuation criterion. We establish these facts in the following proposition.

\begin{proposition}[Local smooth solutions and continuation criterion]\label{prop:local}
Under assumption \eqref{eq:2.1}, there exists \(T_0>0\) such that \eqref{eq:1.1}--\eqref{eq:1.2} admits a unique local smooth solution. Moreover, for every integer \(m\ge2\),
\[
\begin{aligned}
u&\in C([0,T_0];C_b^m(\mathbb R))
\cap C^1([0,T_0];C_b^{m-1}(\mathbb R)),\\
f&\in C([0,T_0];C_b^m(\mathbb R^2))
\cap C^1([0,T_0];C_b^{m-1}(\mathbb R^2)),
\end{aligned}
\]
and the phase-space support of \(f(t)\) is contained in a fixed compact set for \(0\le t\le T_0\). Let \([0,T_{\text{max}})\) be the maximal interval of smooth existence. If \(T_{\text{max}}<\infty\), then
\[
\limsup_{t\to T_{\text{max}}^{-}}
\left(
\|u(t)\|_{C_b^2}
+\|f(t)\|_{C_b^2}
\right)=+\infty.
\eqtaglabel{eq:2.4}
\]
\end{proposition}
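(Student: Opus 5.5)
We construct the local solution by a Picard iteration on characteristics. Given an iterate \(u^n\in C([0,T];C_b^m)\), we define \(f^{n+1}\) by solving the linear Vlasov equation \(f_t+vf_x+(f(u^n-v))_v=0\) along the particle characteristics \(\dot X=V\), \(\dot V=u^n(t,X)-V\). Since the divergence of this field in \((x,v)\) equals \(-1\), we obtain the explicit representation
\[
f^{n+1}(t,X(t),V(t))=e^{t}f_0(X(0),V(0)).
\]
This formula gives nonnegativity of \(f^{n+1}\) and control of its support. The relation \(\frac{d}{dt}|V|\le \|u^n\|_{L^\infty}+|V|\) shows that the support stays in a compact set depending only on \(T\), \(\|u^n\|_{L^\infty}\) and \(\operatorname{supp} f_0\).

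Next we set \(\rho^{n+1}=\int f^{n+1}dv\) and \(j^{n+1}=\int vf^{n+1}dv\). Because the support is compact, these moments inherit \(C_b^m\) bounds from \(f^{n+1}\). We then define \(u^{n+1}\) by solving the linear transport equation
\[
u_t+u^n u_x=j^{n+1}-u\rho^{n+1}
\]
along the characteristics of \(u^n\). Differentiating the characteristic ODEs \(m\) times yields a linear system for the derivatives. Gronwall's inequality then gives a uniform bound
\[
\|u^{n}(t)\|_{C_b^m}+\|f^{n}(t)\|_{C_b^m}\le 2\bigl(\|u_0\|_{C_b^m}+\|f_0\|_{C_b^m}\bigr)
\]
on a short interval \([0,T_0]\). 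Here \(T_0\) depends only on the \(C_b^2\) norms and the support of the data, by the standard tame structure in which the top-order terms appear linearly.

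Contraction is proved in the low norm \(C_b^0\times C_b^0\), using differences of characteristics and the compact support, with \(T_0\) shrunk if needed. Interpolation with the uniform \(C_b^m\) bounds then gives convergence in \(C_b^{m-1}\) for every \(m\). Since \(T_0\) depends only on the \(C_b^2\)-level data, the limit belongs to \(C([0,T_0];C_b^m)\) for all \(m\). The \(C^1\)-in-time regularity with values in \(C_b^{m-1}\) follows by reading \(\partial_t u\) and \(\partial_t f\) off the equations. Time continuity in the top norm, as opposed to weak-star continuity, is the delicate point. We would handle it by applying the bound at order \(m+1\) and using interpolation, which is possible because the data are \(C^\infty\). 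Uniqueness follows from the same difference estimate applied to two smooth solutions.

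For the continuation criterion (2.4), suppose \(T_{\max}<\infty\) but \(\|u\|_{C_b^2}+\|f\|_{C_b^2}\le M\) on \([0,T_{\max})\). The support bound uses only \(\|u\|_{L^\infty}\), so \(\operatorname{supp} f(t)\) remains in a fixed compact set. Higher derivatives satisfy linear equations whose coefficients are controlled by the \(C_b^2\) norms; for example, \(\partial^m u\) obeys \((\partial_t+u\partial_x)\partial^m u=-(m+1)u_x\partial^m u+\dots\). Gronwall therefore gives finite \(C_b^m\) bounds on \([0,T_{\max})\) for every \(m\). Because the local existence time depends only on the \(C_b^2\) size and the support radius, restarting from \(t=T_{\max}-T_0(M)/2\) extends the solution past \(T_{\max}\), which is a contradiction.

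The main obstacle is the derivative bookkeeping at the top order. We must verify that the commutator terms, together with the moment terms \(\int\partial^\alpha f\,dv\) over the compact support, close linearly in the order-\(m\) quantities with coefficients involving only \(C_b^2\) norms. This structure is what makes \(T_0\) independent of \(m\). We would first check this closure carefully at order \(m=3\) and then argue by induction on \(m\) using the Leibniz rule.
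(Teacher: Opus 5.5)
Your proposal is correct and follows essentially the same route as the paper: the same characteristic Picard scheme (particles driven by the previous fluid iterate, fluid transported by the previous iterate with the new moments), uniform high-order bounds on an interval fixed by the \(C_b^2\) data and the support radius, contraction in a low norm upgraded by interpolation, and the restart argument for \eqref{eq:2.4}. The differences are only in detail, and both variants close. You contract in \(C_b^0\) rather than \(C_b^1\), and you invoke tame interpolation to keep \(T_0\) independent of \(m\), whereas the paper runs a linear Gr\"onwall estimate order by order on the already fixed interval \([0,T_0]\).
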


\begin{proof}
We adapt the characteristic iteration used in \cite{ref10} for a coupled Euler-Vlasov spray model to the present one-dimensional Burgers-Vlasov system and carry out the estimates in \(C_b^k\) spaces. Throughout the proof, \(C\) denotes a positive constant depending only on the initial-data norms and the support of \(f_0\); it is independent of \(n\) and may vary from line to line. Since \(f_0\) is compactly supported, we may choose \(R_0>0\) sufficiently large so that
\[
\|u_0\|_{L^\infty}\le R_0,
\qquad
\operatorname{supp}f_0
\subset
\{(x,v)\in\mathbb R^2:|x|+|v|\le R_0\}.
\]
Set \(u^{(0)}=u_0\) and \(f^{(0)}=f_0\). For \(n\ge1\), all iterates are assigned the common initial data
\[
u^{(n)}(0,x)=u_0(x),\qquad
f^{(n)}(0,x,v)=f_0(x,v).
\]
Once \(u^{(n-1)}\) is known, first solve
\[
f_t^{(n)}+vf_x^{(n)}
+\bigl((u^{(n-1)}-v)f^{(n)}\bigr)_v=0,
\eqtaglabel{eq:2.5}
\]
or, equivalently,
\[
f_t^{(n)}+vf_x^{(n)}
+\bigl(u^{(n-1)}-v\bigr)f_v^{(n)}=f^{(n)}.
\]
Set
\[
\rho^{(n)}(t,x)=\int_{\mathbb R}f^{(n)}(t,x,v)\,dv,
\qquad
j^{(n)}(t,x)=\int_{\mathbb R}v f^{(n)}(t,x,v)\,dv,
\]
and then solve
\[
\begin{aligned}
u_t^{(n)}+u^{(n-1)}u_x^{(n)}
={}&j^{(n)}-\rho^{(n)}u^{(n)}.
\end{aligned}
\eqtaglabel{eq:2.6}
\]
The associated particle characteristics satisfy
\[
\begin{cases}
\displaystyle\frac{dX^{(n)}}{dt}=V^{(n)},\\[2mm]
\displaystyle\frac{dV^{(n)}}{dt}=u^{(n-1)}(t,X^{(n)})-V^{(n)},
\end{cases}
\qquad
\bigl(X^{(n)}(0),V^{(n)}(0)\bigr)=(x_0,v_0),
\eqtaglabel{eq:2.7}
\]
and therefore
\[
f^{(n)}(t,X^{(n)}(t),V^{(n)}(t))=e^tf_0(x_0,v_0),
\eqtaglabel{eq:2.8}
\]
\[
\begin{aligned}
V^{(n)}(t)
&=e^{-t}v_0+
\int_0^t e^{-(t-\tau)}u^{(n-1)}(\tau,X^{(n)}(\tau))\,d\tau,\\
X^{(n)}(t)
&=x_0+\int_0^tV^{(n)}(\tau)\,d\tau.
\end{aligned}
\eqtaglabel{eq:2.9}
\]

We next establish the support and second-order estimates used to close the iteration. Fix \(M>0\) sufficiently large so that
\[
\|u_0\|_{C_b^2}+\|f_0\|_{C_b^2}<\frac{M}{2}.
\]
Since only a local solution is being constructed, we may restrict attention to \(0<T\le1\). As the induction hypothesis, assume that
\[
\sup_{0\le t\le T}\|u^{(n-1)}(t)\|_{C_b^2}\le M.
\]
The induction starts from \(u^{(0)}=u_0\), and the estimates below close it for sufficiently small \(T\). For every characteristic issued from \(\operatorname{supp}f_0\), the first identity in \eqref{eq:2.9} gives, for \(0\le t\le T\le1\),
\[
\begin{aligned}
|V^{(n)}(t)|
&\le e^{-t}|v_0|
+\int_0^t e^{-(t-\tau)}
\|u^{(n-1)}(\tau)\|_{L^\infty}\,d\tau\\
&\le R_0+M.
\end{aligned}
\]
The second identity in \eqref{eq:2.9} then yields
\[
\begin{aligned}
|X^{(n)}(t)|
&\le |x_0|+\int_0^t|V^{(n)}(\tau)|\,d\tau\\
&\le R_0+T(R_0+M)
\le2R_0+M.
\end{aligned}
\]
Consequently,
\[
|X^{(n)}(t)|+|V^{(n)}(t)|\le3R_0+2M.
\]
Setting \(R=3R_0+2M\), we obtain the \(n\)-independent support bound
\[
\operatorname{supp}f^{(n)}(t)
\subset
\{(x,v)\in\mathbb R^2:|x|+|v|\le R\},
\qquad 0\le t\le T.
\]

This support estimate also controls the velocity moments in \eqref{eq:2.6}. For \(0\le k\le2\), differentiation under the integral sign gives
\[
\begin{aligned}
\|\partial_x^k\rho^{(n)}(t)\|_{L_x^\infty}
&\le\int_{-R}^{R}
\|\partial_x^kf^{(n)}(t,\cdot,v)\|_{L_x^\infty}\,dv\\
&\le2R\|\partial_x^kf^{(n)}(t)\|_{L_{x,v}^\infty},
\end{aligned}
\]
and
\[
\begin{aligned}
\|\partial_x^kj^{(n)}(t)\|_{L_x^\infty}
&\le\int_{-R}^{R}|v|\,
\|\partial_x^kf^{(n)}(t,\cdot,v)\|_{L_x^\infty}\,dv\\
&\le R^2\|\partial_x^kf^{(n)}(t)\|_{L_{x,v}^\infty}.
\end{aligned}
\]
Consequently,
\[
\|\rho^{(n)}(t)\|_{C_b^2(\mathbb R)}
\le2R\|f^{(n)}(t)\|_{C_b^2(\mathbb R^2)},
\]
and
\[
\|j^{(n)}(t)\|_{C_b^2(\mathbb R)}
\le R^2\|f^{(n)}(t)\|_{C_b^2(\mathbb R^2)}.
\]
Adding these two estimates, we obtain
\[
\begin{aligned}
\|\rho^{(n)}(t)\|_{C_b^2}
+\|j^{(n)}(t)\|_{C_b^2}
&\le(2R+R^2)\|f^{(n)}(t)\|_{C_b^2}\\
&\le C_R\|f^{(n)}(t)\|_{C_b^2}.
\end{aligned}
\]

We next derive the second-order estimate for \(f^{(n)}\). Differentiating the nondivergence form of \eqref{eq:2.5} once with respect to \(x\) and \(v\), respectively, we obtain
\[
\begin{aligned}
(f_x^{(n)})_t+v(f_x^{(n)})_x
+\bigl(u^{(n-1)}-v\bigr)(f_x^{(n)})_v
&=f_x^{(n)}-u_x^{(n-1)}f_v^{(n)},\\
(f_v^{(n)})_t+v(f_v^{(n)})_x
+\bigl(u^{(n-1)}-v\bigr)(f_v^{(n)})_v
&=2f_v^{(n)}-f_x^{(n)}.
\end{aligned}
\]
The second-order derivatives satisfy
\[
\begin{aligned}
(f_{xx}^{(n)})_t+v(f_{xx}^{(n)})_x
+\bigl(u^{(n-1)}-v\bigr)(f_{xx}^{(n)})_v
={}&f_{xx}^{(n)}-2u_x^{(n-1)}f_{xv}^{(n)}\\
&\quad-u_{xx}^{(n-1)}f_v^{(n)},\\[1mm]
(f_{xv}^{(n)})_t+v(f_{xv}^{(n)})_x
+\bigl(u^{(n-1)}-v\bigr)(f_{xv}^{(n)})_v
&=2f_{xv}^{(n)}-f_{xx}^{(n)}-u_x^{(n-1)}f_{vv}^{(n)},\\[1mm]
(f_{vv}^{(n)})_t+v(f_{vv}^{(n)})_x
+\bigl(u^{(n-1)}-v\bigr)(f_{vv}^{(n)})_v
&=3f_{vv}^{(n)}-2f_{xv}^{(n)}.
\end{aligned}
\]
Thus the right-hand sides contain only derivatives of \(f^{(n)}\) of order at most two, derivatives of \(u^{(n-1)}\) of order at most two, and their products. Integrating these equations along the particle characteristics, taking the \(L^\infty\)-norm, and summing over all derivatives of order at most two give
\[
\begin{aligned}
\|f^{(n)}(t)\|_{C_b^2}
\le{}&\|f_0\|_{C_b^2}\\
&+C\int_0^t\Bigl[
\|f^{(n)}(\tau)\|_{C_b^2}
+\|u_x^{(n-1)}(\tau)\|_{L^\infty}
 \|f^{(n)}(\tau)\|_{C_b^2}\\
&\hspace{32mm}
+\|u_{xx}^{(n-1)}(\tau)\|_{L^\infty}
 \|f_v^{(n)}(\tau)\|_{L^\infty}
\Bigr]\,d\tau\\
\le{}&\|f_0\|_{C_b^2}
+C\int_0^t
\bigl(1+\|u^{(n-1)}(\tau)\|_{C_b^2}\bigr)
\|f^{(n)}(\tau)\|_{C_b^2}\,d\tau.
\end{aligned}
\eqtaglabel{eq:2.10}
\]

We now derive the corresponding estimate for \(u^{(n)}\). Differentiating \eqref{eq:2.6} once with respect to \(x\) gives
\[
\begin{aligned}
(u_x^{(n)})_t+u^{(n-1)}(u_x^{(n)})_x
={}&j_x^{(n)}-u^{(n)}\rho_x^{(n)}\\
&-\bigl(\rho^{(n)}+u_x^{(n-1)}\bigr)u_x^{(n)},
\end{aligned}
\]
while a second differentiation gives
\[
\begin{aligned}
(u_{xx}^{(n)})_t+u^{(n-1)}(u_{xx}^{(n)})_x
={}&j_{xx}^{(n)}-u^{(n)}\rho_{xx}^{(n)}
-2\rho_x^{(n)}u_x^{(n)}\\
&-\bigl(\rho^{(n)}+2u_x^{(n-1)}\bigr)u_{xx}^{(n)}
-u_{xx}^{(n-1)}u_x^{(n)}.
\end{aligned}
\]
Integrating the original equation and these two differentiated equations along the fluid characteristics, taking the \(L^\infty\)-norm, and summing the estimates through order two, we obtain
\[
\begin{aligned}
\|u^{(n)}(t)\|_{C_b^2}
\le{}&\|u_0\|_{C_b^2}\\
&+C\int_0^t\Bigl[
\|j^{(n)}(\tau)\|_{C_b^2}
+\|\rho^{(n)}(\tau)\|_{C_b^2}
 \bigl(1+\|u^{(n)}(\tau)\|_{C_b^2}\bigr)\\
&\hspace{27mm}
+\|u^{(n-1)}(\tau)\|_{C_b^2}
 \bigl(1+\|u^{(n)}(\tau)\|_{C_b^2}\bigr)
\Bigr]\,d\tau\\
\le{}&\|u_0\|_{C_b^2}
+C\int_0^t
\bigl(1+\|u^{(n-1)}(\tau)\|_{C_b^2}
+\|f^{(n)}(\tau)\|_{C_b^2}\bigr)\\
&\qquad\quad\times
\bigl(1+\|u^{(n)}(\tau)\|_{C_b^2}\bigr)\,d\tau.
\end{aligned}
\eqtaglabel{eq:2.11}
\]
Under the induction hypothesis, \eqref{eq:2.10} and Grönwall's inequality give
\[
\|f^{(n)}(t)\|_{C_b^2}
\le \|f_0\|_{C_b^2}e^{C(1+M)t}.
\]
We may therefore choose \(0<T_0\le1\) sufficiently small so that
\[
\sup_{0\le t\le T_0}\|f^{(n)}(t)\|_{C_b^2}
\le\frac{M}{2}.
\]
Substituting this bound into \eqref{eq:2.11} and applying Grönwall's inequality again yield
\[
1+\|u^{(n)}(t)\|_{C_b^2}
\le
\left(1+\|u_0\|_{C_b^2}\right)
e^{C(1+\frac{3M}{2})t}.
\]
After possibly reducing \(T_0\) further, we obtain
\[
\sup_{0\le t\le T_0}\|u^{(n)}(t)\|_{C_b^2}
\le\frac{M}{2}.
\]
Consequently, the induction closes and
\[
\sup_{0\le t\le T_0}
\left(
\|u^{(n)}(t)\|_{C_b^2}+\|f^{(n)}(t)\|_{C_b^2}
\right)\le M.
\eqtaglabel{eq:2.12}
\]
Here \(M\) and \(T_0\) depend only on the initial \(C_b^2\)-norms and the initial support, and are independent of \(n\).

For \(n\ge1\), set
\[
\delta u^{(n)}=u^{(n)}-u^{(n-1)},\qquad
\delta f^{(n)}=f^{(n)}-f^{(n-1)}.
\]
For \(n\ge2\), subtracting two successive iterative equations gives
\[
(\delta f^{(n)})_t
+v(\delta f^{(n)})_x
+\bigl(u^{(n-1)}-v\bigr)(\delta f^{(n)})_v
=\delta f^{(n)}-\delta u^{(n-1)}f_v^{(n-1)},
\]
and
\[
\begin{aligned}
(\delta u^{(n)})_t
+u^{(n-1)}(\delta u^{(n)})_x
={}&\delta j^{(n)}-\rho^{(n)}\delta u^{(n)}
-u^{(n-1)}\delta\rho^{(n)}\\
&-\delta u^{(n-1)}u_x^{(n-1)}.
\end{aligned}
\]
Both differences vanish initially:
\[
\delta u^{(n)}(0,x)=0,
\qquad
\delta f^{(n)}(0,x,v)=0.
\]
Moreover,
\[
\delta\rho^{(n)}
=\int_{\mathbb R}\delta f^{(n)}\,dv,\qquad
\delta j^{(n)}
=\int_{\mathbb R}v\delta f^{(n)}\,dv.
\]
To justify the contraction estimate, we differentiate the particle difference equation once. Differentiation with respect to \(x\) gives
\[
\begin{aligned}
&\bigl((\delta f^{(n)})_x\bigr)_t
+v\bigl((\delta f^{(n)})_x\bigr)_x
+\bigl(u^{(n-1)}-v\bigr)\bigl((\delta f^{(n)})_x\bigr)_v\\
&\quad=(\delta f^{(n)})_x
-u_x^{(n-1)}(\delta f^{(n)})_v
-(\delta u^{(n-1)})_x f_v^{(n-1)}
-\delta u^{(n-1)}f_{xv}^{(n-1)},
\end{aligned}
\]
while differentiation with respect to \(v\) yields
\[
\begin{aligned}
&\bigl((\delta f^{(n)})_v\bigr)_t
+v\bigl((\delta f^{(n)})_v\bigr)_x
+\bigl(u^{(n-1)}-v\bigr)\bigl((\delta f^{(n)})_v\bigr)_v\\
&\quad=2(\delta f^{(n)})_v
-(\delta f^{(n)})_x
-\delta u^{(n-1)}f_{vv}^{(n-1)}.
\end{aligned}
\]
The zero initial data and the uniform estimate \eqref{eq:2.12} therefore imply, after integration along the particle characteristics,
\[
\|\delta f^{(n)}(t)\|_{C_b^1}
\le C\int_0^t
\left(
\|\delta f^{(n)}(\tau)\|_{C_b^1}
+\|\delta u^{(n-1)}(\tau)\|_{C_b^1}
\right)\,d\tau.
\]
The velocity-moment calculation used above, now applied to \(\delta f^{(n)}\), gives
\[
\|\delta\rho^{(n)}(t)\|_{C_b^1}
+\|\delta j^{(n)}(t)\|_{C_b^1}
\le C_R\|\delta f^{(n)}(t)\|_{C_b^1}.
\]
Differentiating the fluid difference equation with respect to \(x\), we find
\[
\begin{aligned}
&\bigl((\delta u^{(n)})_x\bigr)_t
+u^{(n-1)}\bigl((\delta u^{(n)})_x\bigr)_x\\
&\quad=(\delta j^{(n)})_x
-\rho_x^{(n)}\delta u^{(n)}
-\bigl(\rho^{(n)}+u_x^{(n-1)}\bigr)(\delta u^{(n)})_x\\
&\qquad-u^{(n-1)}(\delta\rho^{(n)})_x
-u_x^{(n-1)}\delta\rho^{(n)}\\
&\qquad-(\delta u^{(n-1)})_x u_x^{(n-1)}
-\delta u^{(n-1)}u_{xx}^{(n-1)}.
\end{aligned}
\]
Using the preceding moment estimate and \eqref{eq:2.12}, integration along the fluid characteristics yields
\[
\begin{aligned}
\|\delta u^{(n)}(t)\|_{C_b^1}
\le C\int_0^t\Bigl(
&\|\delta u^{(n)}(\tau)\|_{C_b^1}
+\|\delta f^{(n)}(\tau)\|_{C_b^1}\\
&+\|\delta u^{(n-1)}(\tau)\|_{C_b^1}
\Bigr)\,d\tau.
\end{aligned}
\]
Set
\[
\mathcal D_n(t)
:=
\sup_{0\le s\le t}
\left(
\|\delta u^{(n)}(s)\|_{C_b^1}
+\|\delta f^{(n)}(s)\|_{C_b^1}
\right).
\]
Combining the last two estimates gives
\[
\mathcal D_n(t)
\le C\int_0^t
\bigl(\mathcal D_n(\tau)+\mathcal D_{n-1}(\tau)\bigr)\,d\tau.
\]
The integral form of Grönwall's inequality then gives, for \(0\le t\le T\le T_0\),
\[
\begin{aligned}
\mathcal D_n(t)
&\le C\int_0^t e^{C(t-\tau)}\mathcal D_{n-1}(\tau)\,d\tau\\
&\le Cte^{Ct}\mathcal D_{n-1}(T).
\end{aligned}
\]
Taking \(t=T\), we obtain
\[
\mathcal D_n(T)
\le CTe^{CT}\mathcal D_{n-1}(T).
\eqtaglabel{eq:2.13}
\]
Reducing \(T_0\) once more if necessary, we may assume that
\[
CT_0e^{CT_0}\le\frac12.
\]
Then \eqref{eq:2.13} implies
\[
\mathcal D_n(T_0)\le\frac12\mathcal D_{n-1}(T_0)
\le2^{-(n-1)}\mathcal D_1(T_0).
\]
Consequently, for \(m>n\),
\[
\begin{aligned}
&\sup_{0\le t\le T_0}
\left(
\|u^{(m)}(t)-u^{(n)}(t)\|_{C_b^1}
+\|f^{(m)}(t)-f^{(n)}(t)\|_{C_b^1}
\right)\\
&\qquad\le
\sum_{k=n+1}^{m}\mathcal D_k(T_0)
\le 2^{1-n}\mathcal D_1(T_0)\longrightarrow0
\qquad(n\to\infty).
\end{aligned}
\]
Thus \((u^{(n)},f^{(n)})\) is Cauchy in \(C([0,T_0];C_b^1(\mathbb R)\times C_b^1(\mathbb R^2))\).

To upgrade this convergence to \(C_b^2\), it remains to establish a uniform third-order bound. For \(p+q=3\), differentiating the particle equation gives
\[
\begin{aligned}
&\bigl(\partial_x^p\partial_v^q f^{(n)}\bigr)_t
+v\bigl(\partial_x^p\partial_v^q f^{(n)}\bigr)_x
+\bigl(u^{(n-1)}-v\bigr)
 \bigl(\partial_x^p\partial_v^q f^{(n)}\bigr)_v\\
&\quad=(q+1)\partial_x^p\partial_v^q f^{(n)}
-q\partial_x^{p+1}\partial_v^{q-1}f^{(n)}
-\sum_{h=1}^{p}\binom ph
\bigl(\partial_x^hu^{(n-1)}\bigr)
\partial_x^{p-h}\partial_v^{q+1}f^{(n)},
\end{aligned}
\]
where the term containing \(q-1\) is omitted when \(q=0\). Since \(p+q=3\), the derivative of \(f^{(n)}\) in the \(h\)-th product has order
\[
(p-h)+(q+1)=4-h.
\]
Hence, using the uniform second-order bound \eqref{eq:2.12}, we obtain
\[
\begin{aligned}
&\left\|
\sum_{h=1}^{p}\binom ph
\bigl(\partial_x^hu^{(n-1)}\bigr)
\partial_x^{p-h}\partial_v^{q+1}f^{(n)}
\right\|_{L^\infty}
\\
&\quad\le
C\|u_x^{(n-1)}\|_{L^\infty}\|f^{(n)}\|_{C_b^3}
+C\|u_{xx}^{(n-1)}\|_{L^\infty}\|f^{(n)}\|_{C_b^2}
\\
&\qquad\quad
+\|u_{xxx}^{(n-1)}\|_{L^\infty}\|f_v^{(n)}\|_{L^\infty}
\\
&\quad\le
C\left(
\|f^{(n)}\|_{C_b^3}
+\|u^{(n-1)}\|_{C_b^3}
\right).
\end{aligned}
\]

For the fluid equation, differentiating three times in \(x\) gives
\[
\begin{aligned}
(u_{xxx}^{(n)})_t+u^{(n-1)}(u_{xxx}^{(n)})_x
={}&j_{xxx}^{(n)}-u^{(n)}\rho_{xxx}^{(n)}
-3u_x^{(n)}\rho_{xx}^{(n)}
-3u_{xx}^{(n)}\rho_x^{(n)}\\
&-\bigl(\rho^{(n)}+3u_x^{(n-1)}\bigr)u_{xxx}^{(n)}
-3u_{xx}^{(n-1)}u_{xx}^{(n)}
-u_{xxx}^{(n-1)}u_x^{(n)}.
\end{aligned}
\]
The velocity-moment calculation used above, now at order three, gives
\[
\|\rho^{(n)}(t)\|_{C_b^3}
+\|j^{(n)}(t)\|_{C_b^3}
\le C_R\|f^{(n)}(t)\|_{C_b^3}.
\]
Using this estimate and \eqref{eq:2.12}, and then integrating the differentiated equations along the corresponding characteristics, we obtain
\[
\begin{aligned}
&\|u^{(n)}(t)\|_{C_b^3}
+\|f^{(n)}(t)\|_{C_b^3}\\
&\quad\le
\|u_0\|_{C_b^3}+\|f_0\|_{C_b^3}
+C\int_0^t\Bigl(
1+\|u^{(n)}(\tau)\|_{C_b^3}
+\|f^{(n)}(\tau)\|_{C_b^3}
+\|u^{(n-1)}(\tau)\|_{C_b^3}
\Bigr)\,d\tau .
\end{aligned}
\]
Taking the maximum over the first \(N\) iterates and applying Grönwall's inequality gives a bound independent of \(N\). Letting \(N\to\infty\), we obtain
\[
\sup_{n\ge0}\sup_{0\le t\le T_0}
\left(
\|u^{(n)}(t)\|_{C_b^3}
+\|f^{(n)}(t)\|_{C_b^3}
\right)<\infty.
\eqtaglabel{eq:2.14}
\]

Since \(C_b^3\subset W^{3,\infty}\), the \(L^\infty\)-endpoint Gagliardo-Nirenberg interpolation inequality \cite{ref11} applies on both \(\mathbb R\) and \(\mathbb R^2\), with constants depending only on the dimension. Consequently,
\[
\begin{aligned}
&\|\delta u^{(n)}\|_{C_b^2}
+\|\delta f^{(n)}\|_{C_b^2}\\
&\quad\le C
\left(
\|\delta u^{(n)}\|_{C_b^1}
+\|\delta f^{(n)}\|_{C_b^1}
\right)^{1/2}
\left(
\|\delta u^{(n)}\|_{C_b^3}
+\|\delta f^{(n)}\|_{C_b^3}
\right)^{1/2}.
\end{aligned}
\eqtaglabel{eq:2.15}
\]
By \eqref{eq:2.13}--\eqref{eq:2.15}, the iterates converge in \(C_b^2\). Repeating the third-order estimate at any fixed higher order gives a uniform bound on the same interval \([0,T_0]\); interpolation with the \(C_b^2\)-convergence then yields convergence in \(C_b^m\) for every finite \(m\ge3\). Hence the limit is a local smooth solution. The equations give the stated time regularity, and the same difference estimate yields uniqueness.

It remains to prove the continuation criterion. Suppose, to the contrary, that \(T_{\text{max}}<\infty\) and that there exists \(K<\infty\) such that
\[
\sup_{0\le t<T_{\text{max}}}
\left(\|u(t)\|_{C_b^2}+\|f(t)\|_{C_b^2}\right)\le K.
\]
The same characteristic estimate as in \eqref{eq:2.9}, now applied to the limiting solution, gives a uniform bound on the particle velocity support for \(0\le t<T_{\text{max}}\). Since \(T_{\text{max}}<\infty\) and \(X'=V\), the spatial support is also contained in a fixed compact set. Consequently, for every \(t_0<T_{\text{max}}\), the data \((u(t_0),f(t_0))\) have the same \(C_b^2\) bound and the same support bound. The local construction therefore supplies a number \(\delta>0\), depending only on these uniform bounds, such that the solution restarted at any \(t_0<T_{\text{max}}\) exists at least up to \(t_0+\delta\). Choosing \(t_0>T_{\text{max}}-\frac{\delta}{2}\) produces a classical continuation beyond \(T_{\text{max}}\). The same fixed-order differentiation argument used after \eqref{eq:2.15} shows that this continuation remains smooth. This contradicts the maximality of \([0,T_{\text{max}})\) and proves \eqref{eq:2.4}.
\end{proof}

The preceding construction is carried out for smooth data because \thmref{thm:main} concerns smooth solutions. Its estimates, however, are fixed-order estimates. Combining them with a nonnegative regularization of the initial data gives the following finite-regularity counterpart. The approximation argument is completed after the a priori estimates in \secref{sec:main-proof}.

\begin{corollary}[Finite-regularity extension]\label{cor:finite-regularity}
Let \(m\ge2\) be an integer. Assume that \(f_0\) is compactly supported in \(\mathbb R^2\) and
\[
u_0\in W^{m,\infty}(\mathbb R),
\qquad
0\le f_0\in W^{m,\infty}(\mathbb R^2).
\eqtaglabel{eq:2.16}
\]
Define
\[
\rho_0(x)=\int_{\mathbb R}f_0(x,v)\,dv,
\qquad
F_0(x)=u_0'(x)+\rho_0(x).
\]
Since \(f_0\) is compactly supported, differentiation under the velocity integral shows that
\(\rho_0\in W^{m,\infty}(\mathbb R)\). Since \(m\ge2\), both \(u_0'\) and
\(\rho_0\) may be taken to be Lipschitz continuous. Hence \(F_0(x)\) is
defined for every \(x\in\mathbb R\).
Then problem \eqref{eq:1.1}--\eqref{eq:1.2} admits a unique global finite-regularity classical solution if and only if
\[
F_0(x)\ge0,
\qquad x\in\mathbb R.
\eqtaglabel{eq:2.17}
\]
More precisely, for every finite \(T>0\),
\[
\begin{aligned}
u&\in C\bigl([0,T];W^{m-1,\infty}(\mathbb R)\bigr)
\cap L^\infty\bigl(0,T;W^{m,\infty}(\mathbb R)\bigr),\\
f&\in C\bigl([0,T];W^{m-1,\infty}(\mathbb R^2)\bigr)
\cap L^\infty\bigl(0,T;W^{m,\infty}(\mathbb R^2)\bigr),
\end{aligned}
\eqtaglabel{eq:2.18}
\]
Moreover, for every finite \(T>0\), the phase-space support of \(f(t)\) remains
in a fixed compact set for \(0\le t\le T\). If \(F_0(x_0)<0\) for some
\(x_0\in\mathbb R\), then the maximal lifespan \(T_{\mathrm{max}}^{(m)}\)
in the class \eqref{eq:2.18} satisfies
\[
T_{\mathrm{max}}^{(m)}
\le\frac{1}{-F_0(x_0)}<\infty.
\eqtaglabel{eq:2.19}
\]
\end{corollary}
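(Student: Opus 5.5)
The corollary is proved by regularizing the data, applying the smooth theory to the regularized problems, and passing to the limit with fixed-order bounds that are uniform in the regularization parameter. The necessity part and the lifespan bound \eqref{eq:2.19} are then obtained by running the Riccati argument directly on the finite-regularity solution.

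\textbf{Regularization.} Fix a nonnegative mollifier \(\eta_\varepsilon\) on \(\mathbb R\) and \(\mathbb R^2\), and set
\[
u_0^\varepsilon=\eta_\varepsilon*u_0,\qquad f_0^\varepsilon=\eta_\varepsilon*f_0.
\]
Then \(u_0^\varepsilon\in C_b^\infty\) and \(0\le f_0^\varepsilon\in C_c^\infty\), with support in the \(\varepsilon\)-neighbourhood of \(\operatorname{supp}f_0\). We also have \(\|u_0^\varepsilon\|_{W^{m,\infty}}\le\|u_0\|_{W^{m,\infty}}\) and the corresponding bound for \(f_0^\varepsilon\). Since convolution in \(x\) commutes with the \(v\)-integral,
\[
F_0^\varepsilon=(u_0^\varepsilon)'+\rho_0^\varepsilon=\tilde\eta_\varepsilon*F_0 ,
\]
where \(\tilde\eta_\varepsilon\) is the \(x\)-marginal of \(\eta_\varepsilon\), which is itself nonnegative. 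Hence \(F_0\ge0\) implies \(F_0^\varepsilon\ge0\). This is why the regularization must be nonnegative and why the \(x\)-marginal must be taken.

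\textbf{Sufficiency.} By \thmref{thm:main}, each regularized problem has a global smooth solution \((u^\varepsilon,f^\varepsilon)\). The a priori estimates of \secref{sec:main-proof} must give, on each \([0,T]\), bounds independent of \(\varepsilon\) that depend only on \(\|u_0\|_{W^{m,\infty}}\), \(\|f_0\|_{W^{m,\infty}}\) and the support of \(f_0\). These bounds are:
\begin{itemize}
\item the invariance \(F^\varepsilon\ge0\), together with the upper bound on \(F\) from \(F_t+uF_x=F(\rho-F)\);
\item bounds on \(\|u^\varepsilon\|_{L^\infty}\) and on the phase-space support via \eqref{eq:2.9};
\item a bound on \(\|f^\varepsilon\|_{L^\infty}\le e^t\|f_0\|_{L^\infty}\), hence on \(\rho^\varepsilon\), and therefore on \(u^\varepsilon_x\), which lies between \(-\rho^\varepsilon\) and \(\sup F^\varepsilon\);
\item by induction on the order, the linear Grönwall estimates for derivatives of order \(k\le m\), as in \eqref{eq:2.10}--\eqref{eq:2.11}.
\end{itemize}
At order \(k\) the coefficients involve only derivatives of order at most \(k\), with the top-order terms entering linearly. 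The key check is that only \(C_b^m\)-norms of the data enter these bounds, never higher ones.

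\textbf{Limit passage.} The \(C_b^1\) difference estimate from the proof of \propref{prop:local}, applied to two regularized solutions and now with nonzero initial differences, shows that \((u^\varepsilon,f^\varepsilon)\) is Cauchy in \(C([0,T];C_b^1)\). Interpolation via \eqref{eq:2.15} against the uniform \(W^{m,\infty}\) bound upgrades this to convergence in \(C([0,T];W^{m-1,\infty})\). Weak-\(*\) lower semicontinuity then places the limit in \(L^\infty(0,T;W^{m,\infty})\), giving the class \eqref{eq:2.18}. Since \(m-1\ge1\), the limit is classical, and it inherits the compact support. Uniqueness in this class follows from the same \(C_b^1\) difference estimate, which needs only \(u_{xx}\), \(f_{xv}\) and \(f_{vv}\) in \(L^\infty\), and these are available because \(m\ge2\).

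\textbf{Necessity and \eqref{eq:2.19}.} Suppose \(F_0(x_0)<0\) and a solution in the class \eqref{eq:2.18} exists on \([0,T]\). Then \(u_x\) and \(\rho\) are Lipschitz, so \(F=u_x+\rho\) is Lipschitz in \(x\). The identity \(F_t+uF_x=F(\rho-F)\) holds almost everywhere, and along the \(C^1\) fluid characteristic it holds as an absolutely continuous ODE for \(t\mapsto F(t,X(t))\). To justify this I would pass to the limit in the identity for the smooth approximations \(u^\varepsilon,f^\varepsilon\) with \(F_0^\varepsilon(x_0)\to F_0(x_0)\); this requires local existence for the regularized problems on a common interval, which the fixed-order local theory of \propref{prop:local} provides, with \(T_0\) depending only on \(C_b^2\) norms. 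Since \(\rho\ge0\), we get \(\frac{d}{dt}(-F)\ge(-F)^2\) along the characteristic, so \(-F\) blows up no later than \(1/(-F_0(x_0))\). This contradicts \(u_x,\rho\in L^\infty\) for any \(T\ge 1/(-F_0(x_0))\).

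The main obstacle is the uniformity in \(\varepsilon\) of the order-\(m\) estimates, and in particular confirming that the Section 3 bounds use only the fixed-order norms. A secondary difficulty is the rigorous ODE along characteristics at regularity \(m=2\). I would handle that by approximation rather than by working directly with Lipschitz calculus.
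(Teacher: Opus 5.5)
Your proposal is correct and has the same architecture as the paper's proof: threshold-preserving nonnegative mollification, $\varepsilon$-uniform order-$m$ bounds from \secref{sec:main-proof} for the global smooth approximations, a Cauchy limit with weak-$*$ compactness at order $m$, and the Riccati comparison along $Y(t;x_0)$. Two steps are carried out differently.

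For convergence, the paper differentiates \eqref{eq:4.24}--\eqref{eq:4.25} directly up to order $m-1$ to reach \eqref{eq:4.26}. You close only the first-order difference estimate, which needs just the uniform $C_b^2$ bounds. You then interpolate against \eqref{eq:4.23} via $\|g\|_{W^{m-1,\infty}}\le C\|g\|_{W^{1,\infty}}^{1/(m-1)}\|g\|_{W^{m,\infty}}^{(m-2)/(m-1)}$, of which \eqref{eq:2.15} is the case $m=3$. This trades the higher-order commutator bookkeeping for a standard interpolation inequality.

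For necessity, the paper applies the chain rule to the Lipschitz function $F$ along the single curve $Y(t;x_0)$. Read literally, this needs a short supplement: the integrated form of \eqref{eq:3.11} holds for a.e.\ $y_0$ because the characteristic flow is bi-Lipschitz, and then for every $y_0$ by continuity of both sides. You instead pass to the limit in the smooth identity and so avoid Lipschitz calculus. The price is that the common interval from \propref{prop:local} covers only $[0,T_*]$. To follow the characteristic up to times near $1/(-F_0(x_0))$, you must restart the mollification from $(u(t_k),f(t_k))$ in steps of uniform length. This is possible because the local time depends only on the $C_b^2$ norms and the support bound, both bounded on $[0,T]$. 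You then identify each limit with the given solution by uniqueness.

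Two small corrections. First, the kernel applied to $u_0$ must be the $x$-marginal of the one applied to $f_0$, or $F_0^\varepsilon=\tilde\eta_\varepsilon*F_0$ fails; this is automatic for the paper's product kernel. Second, the $L^\infty$ bound on $u^\varepsilon$ comes from the invariance of $[-R_0,R_0]$ in \eqref{eq:4.3}--\eqref{eq:4.5}, with \eqref{eq:2.9} serving only to turn it into a support bound.
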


\section{Characteristics and the Key Evolution Equation}\label{sec:key}

Let \((u,f)\) be the smooth solution on its maximal interval \([0,T_{\text{max}})\). For this solution, write
\[
\rho(t,x)=\int_{\mathbb R}f(t,x,v)\,dv,\qquad
j(t,x)=\int_{\mathbb R}vf(t,x,v)\,dv.
\eqtaglabel{eq:3.1}
\]
The fluid equation becomes
\[
u_t+uu_x=j-u\rho.
\eqtaglabel{eq:3.2}
\]

The particle characteristics \((X(t),V(t))\) satisfy
\[
\begin{cases}
X'(t)=V(t),\\
V'(t)=u(t,X(t))-V(t),
\end{cases}
\qquad
(X(0),V(0))=(x_0,v_0).
\eqtaglabel{eq:3.3}
\]
Writing the Vlasov equation as
\[
f_t+vf_x+(u-v)f_v=f,
\]
we obtain
\[
\frac{d}{dt}f(t,X(t),V(t))
=f(t,X(t),V(t)),
\]
and hence
\[
f(t,X(t),V(t))=e^tf_0(x_0,v_0).
\eqtaglabel{eq:3.4}
\]
Since \(f_0\ge0\), the characteristic representation \eqref{eq:3.4} gives
\[
f(t,x,v)\ge0,\qquad
\|f(t)\|_{L^\infty}
\le e^t\|f_0\|_{L^\infty}.
\eqtaglabel{eq:3.5}
\]
Integrating the second equation in \eqref{eq:1.1} with respect to \(v\), and using the compact velocity support of \(f(t,x,\cdot)\) on every local interval of smooth existence, yields
\[
\rho_t+j_x=0.
\eqtaglabel{eq:3.6}
\]
Differentiating \eqref{eq:3.2} with respect to \(x\), we find
\[
u_{xt}+u_x^2+uu_{xx}
=j_x-\rho u_x-u\rho_x.
\eqtaglabel{eq:3.7}
\]
To derive the blow-up criterion, we introduce the key quantity that combines the fluid velocity gradient and the particle density:
\[
F=u_x+\rho.
\eqtaglabel{eq:3.8}
\]
Combining \eqref{eq:3.6} and \eqref{eq:3.7} gives
\[
\begin{aligned}
F_t+uF_x
&=(u_{xt}+\rho_t)+u(u_{xx}+\rho_x)\\
&=-u_x^2-\rho u_x\\
&=-u_xF
=F(\rho-F).
\end{aligned}
\]
Therefore,
\[
F_t+uF_x=F(\rho-F).
\eqtaglabel{eq:3.9}
\]

For each \(y_0\in\mathbb R\), let \(Y(t;y_0)\) be the fluid characteristic issued from \(y_0\):
\[
Y'(t;y_0)=u(t,Y(t;y_0)),\qquad
Y(0;y_0)=y_0.
\eqtaglabel{eq:3.10}
\]
By the chain rule and \eqref{eq:3.9},
\[
\begin{aligned}
\frac{d}{dt}F(t,Y(t;y_0))
&=F_t(t,Y(t;y_0))
+Y'(t;y_0)F_x(t,Y(t;y_0))\\
&=(F_t+uF_x)(t,Y(t;y_0)),
\end{aligned}
\]
so that
\[
\frac{d}{dt}F(t,Y(t;y_0))
=F(t,Y(t;y_0))
\bigl(\rho(t,Y(t;y_0))-F(t,Y(t;y_0))\bigr).
\eqtaglabel{eq:3.11}
\]

\section{Proof of the Main Theorem}\label{sec:main-proof}

\subsection{Necessity}\label{subsec:necessity}

Assume that \(F_0(x_0)<0\) for some \(x_0\in\mathbb R\). Since \(F\equiv0\) is an equilibrium solution of \eqref{eq:3.11}, uniqueness for the scalar ordinary differential equation implies that
\[
F(t,Y(t;x_0))<0
\]
throughout the smooth lifespan. Along this characteristic, with the arguments suppressed, \eqref{eq:3.11} and \(\rho\ge0\) give
\[
\begin{aligned}
\frac{d}{dt}\left(-\frac1F\right)
&=\frac{F'}{F^2}
=\frac{F(\rho-F)}{F^2}
=\frac{\rho}{F}-1
\le-1.
\end{aligned}
\]
Hence, for every \(0\le t<T_{\text{max}}\),
\[
0< -\frac1{F(t,Y(t;x_0))}
\le -\frac1{F_0(x_0)}-t.
\]
The left-hand side is positive, so the right-hand side must also be positive. Thus every \(t<T_{\text{max}}\) satisfies
\[
t<-\frac1{F_0(x_0)},
\]
and consequently
\[
T_{\text{max}}
\le\frac{1}{-F_0(x_0)}<\infty.
\]
Therefore the maximal smooth lifespan is finite, and a global smooth solution cannot exist. If equality holds in this lifespan bound, then the preceding reciprocal estimate implies
\[
F(t,Y(t;x_0))\longrightarrow-\infty
\qquad\text{as }t\to T_{\mathrm{max}}^{-}.
\]
Since \(u_x=F-\rho\le F\), it follows that
\[
u_x(t,Y(t;x_0))\longrightarrow-\infty.
\]
Thus, in the equality case, breakdown occurs through negative blow-up of the fluid velocity gradient. This proves the necessity.

\subsection{Sufficiency}\label{subsec:sufficiency}

Assume that
\[
F_0(x)=u_0'(x)+\rho_0(x)\ge0,
\qquad x\in\mathbb R.
\eqtaglabel{eq:4.1}
\]
We establish successively bounds for the zeroth-order quantities, \(F\), the first derivatives, and all higher derivatives on an arbitrary finite time interval, and then apply \propref{prop:local}. Recall the constant \(R_0\) introduced in the proof of \propref{prop:local}. In particular,
\[
\|u_0\|_{L^\infty}\le R_0,
\qquad
|v|\le R_0
\quad\text{on }\operatorname{supp}f_0.
\eqtaglabel{eq:4.2}
\]
We first control the fluid velocity and the particle velocity support. Fix \(T<T_{\text{max}}\). We claim that the interval \([-R_0,R_0]\) is invariant for both velocities on \([0,T]\). By \eqref{eq:3.5}, \(f\) remains nonnegative throughout the smooth lifespan. Along a fluid characteristic \(Y(t;y_0)\), set
\[
U(t)=u(t,Y(t;y_0)).
\]
The fluid equation gives
\[
U'(t)=\int_{\mathbb R}
f(t,Y(t;y_0),v)\bigl(v-U(t)\bigr)\,dv.
\eqtaglabel{eq:4.3}
\]
To avoid assuming that the spatial supremum of \(u\) is attained, fix \(\varepsilon>0\) and set \(R_\varepsilon=R_0+\varepsilon\). On \([0,T]\), \propref{prop:local} confines the particle support to a compact set. A fluid characteristic that does not meet this set satisfies \(U'=0\); hence any outward crossing must occur within a compact family of interacting fluid or particle characteristics, and a first contact is attained. Suppose that, prior to this first contact with the boundary of \([-R_\varepsilon,R_\varepsilon]\), all fluid and particle velocities lie in this interval. At the upper fluid boundary,
\[
U=R_\varepsilon
\quad\Longrightarrow\quad
U'=\int_{\mathbb R}
f(t,Y(t;y_0),v)(v-R_\varepsilon)\,dv\le0,
\]
whereas at the lower fluid boundary,
\[
U=-R_\varepsilon
\quad\Longrightarrow\quad
U'=\int_{\mathbb R}
f(t,Y(t;y_0),v)(v+R_\varepsilon)\,dv\ge0.
\]
Thus the fluid velocity cannot cross either boundary outward. Similarly, along a particle characteristic, \(V'=u(t,X)-V\), and hence
\[
V=R_\varepsilon
\quad\Longrightarrow\quad
V'=u(t,X)-R_\varepsilon\le0,
\qquad
V=-R_\varepsilon
\quad\Longrightarrow\quad
V'=u(t,X)+R_\varepsilon\ge0.
\eqtaglabel{eq:4.4}
\]
The particle velocity cannot cross either boundary outward. Since all initial fluid and particle velocities lie strictly inside \([-R_\varepsilon,R_\varepsilon]\), the standard first-contact argument gives
\[
|u(t,x)|\le R_\varepsilon,
\qquad
|V(t)|\le R_\varepsilon,
\qquad 0\le t\le T.
\]
Letting \(\varepsilon\to0\), using that the fluid characteristic flow covers \(\mathbb R\), and applying the representation \eqref{eq:3.4} to particle characteristics issued from \(\operatorname{supp}f_0\), we obtain
\[
\|u(t)\|_{L^\infty}\le R_0,\qquad
f(t,x,v)=0\quad\text{for }|v|>R_0,
\qquad 0\le t\le T.
\eqtaglabel{eq:4.5}
\]
Since \(T<T_{\text{max}}\) is arbitrary, \eqref{eq:4.5} holds throughout the maximal smooth lifespan.

By the same velocity-moment calculation as in \propref{prop:local}, now using \eqref{eq:3.5} and the bound \(|v|\le R_0\) from \eqref{eq:4.5}, for \(0\le t\le T\) we obtain
\[
\|\rho(t)\|_{L^\infty}
\le2R_0e^T\|f_0\|_{L^\infty}.
\eqtaglabel{eq:4.6}
\]
Similarly,
\[
\|j(t)\|_{L^\infty}
\le R_0^2e^T\|f_0\|_{L^\infty}.
\eqtaglabel{eq:4.7}
\]

For each \(y_0\in\mathbb R\), equation \eqref{eq:3.11} is a scalar ordinary differential equation for \(t\mapsto F(t,Y(t;y_0))\). Its right-hand side is locally Lipschitz in \(F\), and \(F\equiv0\) is an equilibrium solution. Since \(F_0(y_0)\ge0\), uniqueness prevents \(F(t,Y(t;y_0))\) from crossing zero. As the fluid characteristic flow covers \(\mathbb R\), condition \eqref{eq:4.1} therefore implies
\[
F(t,x)\ge0.
\eqtaglabel{eq:4.8}
\]
Moreover, \eqref{eq:4.6} gives, along every fluid characteristic,
\[
\frac{dF}{dt}
=F(\rho-F)
\le
F\left(2R_0e^T\|f_0\|_{L^\infty}-F\right).
\eqtaglabel{eq:4.9}
\]
Set
\[
M_T:=\max\left\{
\|F_0\|_{L^\infty},
2R_0e^T\|f_0\|_{L^\infty}
\right\}.
\]
By \eqref{eq:4.6},
\[
0\le\rho(t,x)\le2R_0e^T\|f_0\|_{L^\infty},
\qquad 0\le t\le T.
\]
The lower boundary is invariant because \(F=0\) is an equilibrium solution of \eqref{eq:3.11}. At the upper boundary \(F=M_T\),
\[
\frac{dF}{dt}
=M_T(\rho-M_T)
\le M_T\left(
2R_0e^T\|f_0\|_{L^\infty}-M_T
\right)
\le0.
\]
Therefore the interval \([0,M_T]\) is forward invariant along every fluid characteristic, and hence
\[
0\le F(t,x)\le M_T,
\qquad 0\le t\le T.
\eqtaglabel{eq:4.10}
\]
Since \(u_x=F-\rho\),
\[
\begin{aligned}
\|u_x(t)\|_{L^\infty}
&=\|F(t)-\rho(t)\|_{L^\infty}\\
&\le\|F(t)\|_{L^\infty}+\|\rho(t)\|_{L^\infty}\\
&\le M_T+2R_0e^T\|f_0\|_{L^\infty}\\
&\le\|F_0\|_{L^\infty}
+4R_0e^T\|f_0\|_{L^\infty}
\le C_T,
\qquad 0\le t\le T.
\end{aligned}
\eqtaglabel{eq:4.11}
\]
Here and below, \(C_T\) denotes a finite positive constant depending only on \(T\), \(R_0\), \(\|F_0\|_{L^\infty}\), and \(\|f_0\|_{C_b^1}\). Its value may change from line to line, but it is independent of \(t\in[0,T]\).

We next estimate the first derivatives of the particle distribution \(f\). Write the Vlasov equation as
\[
f_t+vf_x+(u-v)f_v=f.
\]
Differentiation with respect to \(x\) and \(v\) gives
\[
\begin{aligned}
(f_x)_t+v(f_x)_x+(u-v)(f_x)_v
&=f_x-u_xf_v,\\
(f_v)_t+v(f_v)_x+(u-v)(f_v)_v
&=2f_v-f_x.
\end{aligned}
\eqtaglabel{eq:4.12}
\]
Taking absolute values, adding the two inequalities, and integrating along particle characteristics, we obtain
\[
\frac{d}{dt}\bigl(|f_x|+|f_v|\bigr)(t,X(t),V(t))
\le
\bigl(2+\|u_x(t)\|_{L^\infty}\bigr)
\bigl(|f_x|+|f_v|\bigr)(t,X(t),V(t)).
\]
Thus \eqref{eq:4.11} and Grönwall's inequality imply
\[
\begin{aligned}
&\|f_x(t)\|_{L^\infty}+\|f_v(t)\|_{L^\infty}\\
&\quad\le
\left(\|(f_0)_x\|_{L^\infty}+\|(f_0)_v\|_{L^\infty}\right)
\exp\!\left(
\int_0^t\bigl(2+\|u_x(\tau)\|_{L^\infty}\bigr)\,d\tau
\right)
\le C_T.
\end{aligned}
\eqtaglabel{eq:4.13}
\]
Therefore all first-order spatial and velocity derivatives of \(u\) and \(f\) are bounded on every finite time interval. For \(m\ge1\), set
\[
H_m(t):=
1+\|u(t)\|_{C_b^m(\mathbb R)}
+\|f(t)\|_{C_b^m(\mathbb R^2)}.
\]
The preceding estimates show that \(\sup_{0\le t\le T}H_1(t)<\infty\). We now prove the corresponding bound for every \(m\ge2\). Assume inductively that
\[
\sup_{0\le t\le T}H_{m-1}(t)<\infty.
\]

For \(p+q=m\), Leibniz's rule gives
\[
\partial_x^p\partial_v^q(vf_x)
=v(\partial_x^p\partial_v^qf)_x
+q\,\partial_x^{p+1}\partial_v^{q-1}f
\]
and
\[
\begin{aligned}
\partial_x^p\partial_v^q\bigl((u-v)f_v\bigr)
={}&(u-v)(\partial_x^p\partial_v^qf)_v
-q\,\partial_x^p\partial_v^qf\\
&+\sum_{h=1}^{p}\binom ph
(\partial_x^hu)\partial_x^{p-h}\partial_v^{q+1}f.
\end{aligned}
\]
Here and below, the term containing \(q-1\) is omitted when \(q=0\). Applying \(\partial_x^p\partial_v^q\) to the Vlasov equation and using the preceding identities gives
\[
\begin{aligned}
&(\partial_x^p\partial_v^qf)_t
+v(\partial_x^p\partial_v^qf)_x
+(u-v)(\partial_x^p\partial_v^qf)_v\\
&\quad=(q+1)\partial_x^p\partial_v^qf
-q\partial_x^{p+1}\partial_v^{q-1}f
-\sum_{h=1}^{p}\binom ph
(\partial_x^hu)\partial_x^{p-h}\partial_v^{q+1}f,
\end{aligned}
\eqtaglabel{eq:4.14}
\]
For the indices occurring in the sum, the product terms can be classified as follows:
\[
\begin{aligned}
h=1:\quad
&\left\|p\,u_x\,\partial_x^{p-1}\partial_v^{q+1}f\right\|_{L^\infty}
\le C_m\|u_x\|_{L^\infty}\|f\|_{C_b^m},\\[1mm]
2\le h\le m-1:\quad
&\left\|(\partial_x^hu)
\partial_x^{p-h}\partial_v^{q+1}f\right\|_{L^\infty}
\le C_{m,T},\\[1mm]
h=m:\quad
&\left\|(\partial_x^mu)f_v\right\|_{L^\infty}
\le\|f_v\|_{L^\infty}\|u\|_{C_b^m}.
\end{aligned}
\]
The last case occurs only when \((p,q,h)=(m,0,m)\). The middle terms contain only derivatives of order at most \(m-1\) and are therefore controlled by the induction hypothesis. Consequently,
\[
\begin{aligned}
&\left\|
\sum_{h=1}^{p}\binom ph
(\partial_x^hu)\partial_x^{p-h}\partial_v^{q+1}f
\right\|_{L^\infty}\\
&\qquad\le
C_m\|u_x\|_{L^\infty}\|f\|_{C_b^m}
+C_m\|f_v\|_{L^\infty}\|u\|_{C_b^m}
+C_{m,T}.
\end{aligned}
\]
Thus every derivative of order \(m\) occurs linearly and is multiplied only by a quantity already controlled by the induction hypothesis. Integration of \eqref{eq:4.14} along particle characteristics, followed by summation over \(p+q\le m\), yields
\[
\|f(t)\|_{C_b^m}
\le\|f_0\|_{C_b^m}
+C_{m,T}\int_0^t
\left(
1+\|f(\tau)\|_{C_b^m}
+\|u(\tau)\|_{C_b^m}
\right)\,d\tau.
\eqtaglabel{eq:4.15}
\]
Here \(C_{m,T}\) depends only on \(m\), \(R_0\), and the already established bound for \(H_{m-1}\) on \([0,T]\); in particular, it is independent of the \(m\)-th order quantities still being estimated.

For the fluid equation, Leibniz's rule gives
\[
\partial_x^m(uu_x)
=u\,\partial_x^{m+1}u
+\sum_{i=1}^{m}\binom mi
(\partial_x^iu)(\partial_x^{m-i+1}u)
\]
and
\[
\partial_x^m(u\rho)
=u\,\partial_x^m\rho+\rho\,\partial_x^mu
+\sum_{i=1}^{m-1}\binom mi
(\partial_x^iu)(\partial_x^{m-i}\rho).
\]
The terms corresponding to \(i=1\) and \(i=m\) in the first sum combine to \((m+1)u_x\partial_x^mu\). Consequently,
\[
\begin{aligned}
(\partial_x^mu)_t+u(\partial_x^mu)_x
={}&\partial_x^mj-u\,\partial_x^m\rho
-\bigl(\rho+(m+1)u_x\bigr)\partial_x^mu\\
&-\sum_{i=1}^{m-1}\binom mi
(\partial_x^iu)(\partial_x^{m-i}\rho)\\
&-\sum_{i=2}^{m-1}\binom mi
(\partial_x^iu)(\partial_x^{m-i+1}u),
\end{aligned}
\eqtaglabel{eq:4.16}
\]
where the last sum is empty when \(m=2\). By the velocity-support bound and the velocity-moment calculation in \propref{prop:local},
\[
\|\partial_x^m\rho(t)\|_{L^\infty}
\le2R_0\|\partial_x^mf(t)\|_{L^\infty},
\qquad
\|\partial_x^mj(t)\|_{L^\infty}
\le R_0^2\|\partial_x^mf(t)\|_{L^\infty}.
\eqtaglabel{eq:4.17}
\]
The terms on the right-hand side of \eqref{eq:4.16} are estimated separately. First, \eqref{eq:4.5} and \eqref{eq:4.17} give
\[
\begin{aligned}
\|\partial_x^mj-u\,\partial_x^m\rho\|_{L^\infty}
&\le\|\partial_x^mj\|_{L^\infty}
+\|u\|_{L^\infty}\|\partial_x^m\rho\|_{L^\infty}\\
&\le\left(R_0^2+2R_0\|u\|_{L^\infty}\right)
\|\partial_x^mf\|_{L^\infty}\\
&\le C_{R_0}\|f\|_{C_b^m}.
\end{aligned}
\]
Next, the coefficient of the top-order fluid derivative is already controlled:
\[
\left\|\bigl(\rho+(m+1)u_x\bigr)\partial_x^mu\right\|_{L^\infty}
\le
\left(\|\rho\|_{L^\infty}
+(m+1)\|u_x\|_{L^\infty}\right)
\|u\|_{C_b^m}.
\]
Finally, every factor in the two sums in \eqref{eq:4.16} has order at most \(m-1\), since
\[
\begin{aligned}
1\le i\le m-1
&\quad\Longrightarrow\quad i\le m-1,
\qquad m-i\le m-1,\\
2\le i\le m-1
&\quad\Longrightarrow\quad i\le m-1,
\qquad m-i+1\le m-1.
\end{aligned}
\]
The induction hypothesis and the velocity-moment estimate therefore imply
\[
\begin{aligned}
&\left\|
\sum_{i=1}^{m-1}\binom mi
(\partial_x^iu)(\partial_x^{m-i}\rho)
\right\|_{L^\infty}\\
&\quad+
\left\|
\sum_{i=2}^{m-1}\binom mi
(\partial_x^iu)(\partial_x^{m-i+1}u)
\right\|_{L^\infty}
\le C_{m,T}.
\end{aligned}
\]
Integrating \eqref{eq:4.16} along fluid characteristics now gives
\[
\|u(t)\|_{C_b^m}
\le\|u_0\|_{C_b^m}
+C_{m,T}\int_0^t
\left(
1+\|u(\tau)\|_{C_b^m}
+\|f(\tau)\|_{C_b^m}
\right)\,d\tau.
\eqtaglabel{eq:4.18}
\]
Adding \eqref{eq:4.15} and \eqref{eq:4.18}, we obtain
\[
H_m(t)\le H_m(0)+C_{m,T}\int_0^tH_m(\tau)\,d\tau.
\]
Grönwall's inequality therefore yields
\[
\sup_{0\le t\le T}H_m(t)
\le H_m(0)e^{C_{m,T}T}<\infty.
\eqtaglabel{eq:4.19}
\]
Starting from \(m=2\) and arguing inductively proves that all finite-order spatial and velocity derivatives remain bounded on \([0,T]\).

We now complete the continuation argument. Suppose, for contradiction, that
\[
T_{\text{max}}<\infty.
\]
The constants in the preceding a priori estimates depend only on the initial data and on a prescribed finite upper bound for time. Therefore, under the present assumption, these constants can be chosen uniformly for all \(T<T_{\text{max}}\). Taking \(m=2\) in \eqref{eq:4.19}, we obtain
\[
\sup_{0\le t<T_{\text{max}}}
\left(
\|u(t)\|_{C_b^2(\mathbb R)}
+\|f(t)\|_{C_b^2(\mathbb R^2)}
\right)<\infty.
\]
This contradicts the continuation criterion \eqref{eq:2.4}. Hence
\[
T_{\text{max}}=\infty.
\]
Since \(T>0\) is arbitrary, \eqref{eq:4.19} further gives
\[
\sup_{0\le t\le T}H_m(t)<\infty
\qquad
\text{for every finite \(T>0\) and every integer \(m\ge2\)}.
\]
Thus all finite-order spatial and velocity derivatives remain bounded on every finite time interval. The corresponding time regularity follows directly from the equations. Consequently, the solution is globally smooth. This proves the sufficiency and completes the proof of \thmref{thm:main}. \(\square\)

\subsection{Finite-regularity data}\label{subsec:finite-regularity}

We finish by proving \corref{cor:finite-regularity}. The argument follows the smooth-approximation strategy used for general initial data in \cite{ref10}. We first construct a local solution for arbitrary data satisfying \eqref{eq:2.16}. We then use a nonnegative regularization, which preserves the threshold condition, to obtain global existence when \(F_0\ge0\).

Let \(\eta\in C_c^\infty(\mathbb R)\) be nonnegative and satisfy
\[
\int_{\mathbb R}\eta(z)\,dz=1.
\]
For \(\varepsilon>0\), define
\[
\eta_\varepsilon(z)
=
\frac{1}{\varepsilon}
\eta\!\left(\frac{z}{\varepsilon}\right).
\]
For \(0<\varepsilon\le1\), set
\[
u_0^\varepsilon=\eta_\varepsilon*u_0,
\qquad
f_0^\varepsilon(x,v)
=\iint_{\mathbb R^2}
\eta_\varepsilon(x-y)\eta_\varepsilon(v-w)
f_0(y,w)\,dy\,dw.
\eqtaglabel{eq:4.20}
\]
Then
\[
u_0^\varepsilon\in C_b^\infty(\mathbb R),
\qquad
0\le f_0^\varepsilon\in C_c^\infty(\mathbb R^2),
\]
and the supports of \(f_0^\varepsilon\), \(0<\varepsilon\le1\), are contained
in one compact set. The following calculation is written on
\(\mathbb R^d\), with \(d=1\) for \(u_0\) and \(d=2\) for \(f_0\); in the
latter case, \(\eta_\varepsilon\) denotes the product kernel appearing in
\eqref{eq:4.20}. Differentiation under the convolution and Young's
inequality give
\[
\left\|\partial^\alpha(\eta_\varepsilon*g)\right\|_{L^\infty}
=
\left\|\eta_\varepsilon*\partial^\alpha g\right\|_{L^\infty}
\le
\left\|\partial^\alpha g\right\|_{L^\infty},
\qquad |\alpha|\le m.
\]
Hence
\[
\begin{aligned}
&\sup_{0<\varepsilon\le1}
\left(
\|u_0^\varepsilon\|_{W^{m,\infty}}
+\|f_0^\varepsilon\|_{W^{m,\infty}}
\right)
\le
\|u_0\|_{W^{m,\infty}}
+\|f_0\|_{W^{m,\infty}},\\
&u_0^\varepsilon\longrightarrow u_0
\quad\text{in }W^{m-1,\infty}(\mathbb R),
\qquad
f_0^\varepsilon\longrightarrow f_0
\quad\text{in }W^{m-1,\infty}(\mathbb R^2).
\end{aligned}
\eqtaglabel{eq:4.21}
\]
For completeness, we verify the strong convergence in \eqref{eq:4.21}.
Let \(|\alpha|\le m-1\). Then
\[
\partial^\alpha(\eta_\varepsilon*g-g)(x)
=\int_{\mathbb R^d}\eta_\varepsilon(y)
\bigl(\partial^\alpha g(x-y)-\partial^\alpha g(x)\bigr)\,dy.
\]
Since \(|\alpha|+1\le m\), the function \(\partial^\alpha g\) is Lipschitz
continuous. Therefore,
\[
\begin{aligned}
\left\|\partial^\alpha(\eta_\varepsilon*g-g)\right\|_{L^\infty}
&\le
\|\nabla\partial^\alpha g\|_{L^\infty}
\int_{\mathbb R^d}|y|\eta_\varepsilon(y)\,dy\\
&\le
C_\eta\varepsilon\|g\|_{W^{m,\infty}}.
\end{aligned}
\]
Summing over \(|\alpha|\le m-1\) gives
\[
\|\eta_\varepsilon*g-g\|_{W^{m-1,\infty}}
\le C_\eta\varepsilon\|g\|_{W^{m,\infty}}.
\]
Applying the preceding estimate with \(d=2\) to the product kernel in
\eqref{eq:4.20}, we obtain
\[
\|f_0^\varepsilon-f_0\|_{W^{m-1,\infty}(\mathbb R^2)}
\le
C_\eta\varepsilon
\|f_0\|_{W^{m,\infty}(\mathbb R^2)}.
\]
Hence \(f_0^\varepsilon\to f_0\) strongly in
\(W^{m-1,\infty}(\mathbb R^2)\). Notice that these estimates are stated
one order below \(m\): applying the same increment argument to a derivative
of order \(m\) would require a derivative of order \(m+1\), which is not
assumed in \eqref{eq:2.16}.

The construction in \propref{prop:local} and its fixed-order estimates give
a time \(T_*>0\), independent of \(\varepsilon\), on which all the smooth
approximate solutions exist. This common interval will be used to construct
a local finite-regularity solution for arbitrary data satisfying
\eqref{eq:2.16}. We next verify that the threshold condition is preserved
by the regularization.

Let
\[
\rho_0^\varepsilon(x)
=\int_{\mathbb R}f_0^\varepsilon(x,v)\,dv.
\]
Using the definition of \(f_0^\varepsilon\), Fubini's theorem, and
\(\int_{\mathbb R}\eta_\varepsilon(v-w)\,dv=1\), we obtain
\[
\begin{aligned}
\rho_0^\varepsilon(x)
&=\int_{\mathbb R}\iint_{\mathbb R^2}
\eta_\varepsilon(x-y)\eta_\varepsilon(v-w)
f_0(y,w)\,dy\,dw\,dv\\
&=\int_{\mathbb R}\eta_\varepsilon(x-y)
\left(\int_{\mathbb R}f_0(y,w)\,dw\right)dy\\
&=(\eta_\varepsilon*\rho_0)(x).
\end{aligned}
\]
Consequently,
\[
\begin{aligned}
F_0^\varepsilon
&=(u_0^\varepsilon)'+\rho_0^\varepsilon\\
&=\eta_\varepsilon*(u_0'+\rho_0)
=\eta_\varepsilon*F_0.
\end{aligned}
\eqtaglabel{eq:4.22}
\]
If \(F_0\ge0\), then \(\eta_\varepsilon\ge0\) and \eqref{eq:4.22} imply \(F_0^\varepsilon\ge0\). In this case, \thmref{thm:main} gives a global smooth approximate solution for every \(\varepsilon>0\).

We now fix the time interval on which the compactness argument is performed.
For arbitrary data satisfying \eqref{eq:2.16}, we work on the common local
existence interval \([0,T_*]\). If \(F_0\ge0\), all the smooth approximate
solutions are global, and the same argument can be carried out on any
prescribed finite interval \([0,T]\). On the chosen interval, the velocity
and support estimates in \secref{sec:main-proof}, followed by the fixed-order
estimates leading to \eqref{eq:4.19}, depend only on the length of the
interval, the common support bound, and the initial norms of order at most
\(m\). Hence \eqref{eq:4.21} gives a constant \(C_{m,T}\), independent of
\(\varepsilon\), such that
\[
\sup_{0<\varepsilon\le1}
\sup_{0\le t\le T}
\left(
\|u^\varepsilon(t)\|_{W^{m,\infty}}
+\|f^\varepsilon(t)\|_{W^{m,\infty}}
\right)
\le C_{m,T}.
\eqtaglabel{eq:4.23}
\]
The phase-space supports of \(f^\varepsilon(t)\) are also contained in one compact set independent of \(\varepsilon\).

We next verify strong convergence one derivative below the uniform bound. For \(0<\varepsilon,\delta\le1\), set
\[
\begin{aligned}
D_{\varepsilon,\delta}(t)
:=\sup_{0\le s\le t}\Bigl(&
\|u^\varepsilon(s)-u^\delta(s)\|_{W^{m-1,\infty}}
\\
&+\|f^\varepsilon(s)-f^\delta(s)\|_{W^{m-1,\infty}}
\Bigr).
\end{aligned}
\]
Write
\[
\delta u=u^\varepsilon-u^\delta,
\quad
\delta f=f^\varepsilon-f^\delta,
\quad
\delta\rho=\rho^\varepsilon-\rho^\delta,
\quad
\delta j=j^\varepsilon-j^\delta.
\]
Subtracting the two particle equations and using the transport field generated by \(u^\varepsilon\), we obtain
\[
(\delta f)_t+v(\delta f)_x
+(u^\varepsilon-v)(\delta f)_v
=\delta f-\delta u\,f_v^\delta.
\eqtaglabel{eq:4.24}
\]
Similarly, the two fluid equations give
\[
(\delta u)_t+u^\varepsilon(\delta u)_x
=\delta j-\rho^\varepsilon\delta u
-u^\delta\delta\rho-\delta u\,u_x^\delta.
\eqtaglabel{eq:4.25}
\]
Both differences have initial values determined by \eqref{eq:4.21}. Differentiating \eqref{eq:4.24}--\eqref{eq:4.25} up to order \(m-1\), and integrating along the corresponding characteristics, give
\[
D_{\varepsilon,\delta}(t)
\le
C_{m,T}D_{\varepsilon,\delta}(0)
+C_{m,T}\int_0^t
D_{\varepsilon,\delta}(\tau)\,d\tau,
\qquad 0\le t\le T.
\eqtaglabel{eq:4.26}
\]
To justify the closure of \eqref{eq:4.26}, observe that every derivative falling on a difference has order at most \(m-1\), whereas the remaining coefficient is bounded by \eqref{eq:4.23}; if one derivative falls on such a coefficient, at most \(m\) derivatives of \(u^\varepsilon\) or \(f^\varepsilon\) occur. The moment differences are bounded, uniformly in \(\varepsilon\) and \(\delta\), by the corresponding derivatives of \(f^\varepsilon-f^\delta\), because all velocity supports lie in the same compact set. This is the same finite-order difference estimate as in \eqref{eq:2.13}, applied at order \(m-1\).

By Grönwall's inequality and \eqref{eq:4.21}, the family \((u^\varepsilon,f^\varepsilon)\) is Cauchy in
\[
C\bigl([0,T];W^{m-1,\infty}(\mathbb R)
\times W^{m-1,\infty}(\mathbb R^2)\bigr).
\]
Denote the strong limit by \((u,f)\). By the uniform estimate \eqref{eq:4.23} and the Banach--Alaoglu theorem, the derivatives of order \(m\) are weak-* compact in \(L^\infty\). On the other hand, the strong convergence in \(W^{m-1,\infty}\) implies convergence in the sense of distributions. Since differentiation is continuous in the space of distributions, the weak-* limits are precisely the corresponding \(m\)-th order distributional derivatives of \(u\) and \(f\). Consequently,
\[
(u,f)\in
L^\infty\bigl(0,T;W^{m,\infty}(\mathbb R)
\times W^{m,\infty}(\mathbb R^2)\bigr).
\]
Moreover, the estimate \eqref{eq:4.23} passes to the limit.
Since \(m\ge2\), the strong convergence takes place at least in \(W^{1,\infty}\). The nonlinear products therefore converge strongly, while the velocity moments converge strongly because of the common compact velocity support. Passing to the limit in the two equations proves that \((u,f)\) solves \eqref{eq:1.1}--\eqref{eq:1.2}. The equations and the uniform \(W^{m,\infty}\) bound give
\[
u_t\in L^\infty\bigl(0,T;W^{m-1,\infty}(\mathbb R)\bigr),
\qquad
f_t\in L^\infty\bigl(0,T;W^{m-1,\infty}(\mathbb R^2)\bigr).
\]
They also give the strong time continuity in \(W^{m-1,\infty}\), and therefore \eqref{eq:2.18} holds on \([0,T]\). Without a sign condition on \(F_0\), this argument with \(T=T_*\) constructs a local finite-regularity solution. If \(F_0\ge0\), then \(T>0\) is arbitrary, so the solution is global. The estimate \eqref{eq:4.26}, now applied to two solutions with identical initial data, proves uniqueness in this class.

We have proved global existence when the initial threshold is nonnegative. We now prove necessity. Suppose that \(F_0(x_0)<0\) for some \(x_0\in\mathbb R\), and let \([0,T_{\mathrm{max}}^{(m)})\) be the maximal interval of existence of the corresponding finite-regularity solution. Since \(m\ge2\), the function \(F=u_x+\rho\) is locally Lipschitz in the spatial variable, while the transport identity \eqref{eq:3.9} holds almost everywhere. The chain rule along the fluid characteristic \(Y(t;x_0)\) therefore shows that \eqref{eq:3.11} remains valid for almost every \(t<T_{\mathrm{max}}^{(m)}\). Hence the sign-preservation and reciprocal argument used in the necessity part of \thmref{thm:main} applies without change and gives
\[
T_{\mathrm{max}}^{(m)}
\le\frac{1}{-F_0(x_0)}<\infty.
\]
Thus no global solution in the class \eqref{eq:2.18} can exist when the threshold fails. This completes the proof of \corref{cor:finite-regularity}. \(\square\)

\end{document}